\documentclass[a4paper,11pt]{article}
\usepackage[top=3cm,bottom=3cm,left=2.5cm,right=2.5cm]{geometry}
\usepackage[algoruled,linesnumbered]{algorithm2e}
\usepackage{algorithmic}

\pagestyle{empty}

\usepackage{amsfonts,epsf,amsmath,amssymb,here, graphicx}
\usepackage{latexsym,multirow,rotating}
\usepackage{pgf,tikz,subfigure}
\usepackage{epstopdf}

\newtheorem{theorem}{\bf Theorem}[section]
\newtheorem{corollary}[theorem]{\bf Corollary}
\newtheorem{lemma}[theorem]{\bf Lemma}

\newtheorem{definition}[theorem]{\bf Definition}

\newcommand{\proof}{\noindent{\bf Proof.\ }}
\newcommand{\qed}{\hfill $\square$ \bigskip}
\newcommand{\s}{\color{black}}

\textheight = 24cm
\textwidth = 16cm

\begin{document}

\baselineskip=0.30in
\vspace*{40mm}

\begin{center}
{\LARGE \bf Generalized cut method for computing the edge-Wiener index}
\bigskip \bigskip

{\large \bf Niko Tratnik
}
\bigskip\bigskip

\baselineskip=0.20in

Faculty of Education, University of Maribor, Slovenia \\
Faculty of Natural Sciences and Mathematics, University of Maribor, Slovenia \\
{\tt niko.tratnik@um.si, niko.tratnik@gmail.com}
\medskip

\bigskip\medskip

(Received October 28, 2019)

\end{center}

\noindent
{\bf Abstract}

\vspace{3mm}\noindent
{\s The edge-Wiener index of a connected graph $G$ is defined as the Wiener index of the line graph of $G$. In this paper it is shown that the edge-Wiener index of an edge-weighted graph can be computed in terms of the Wiener index, the edge-Wiener index, and the vertex-edge-Wiener index of weighted quotient graphs which are defined by a partition of the edge set that is coarser than $\Theta^*$-partition. Thus, already known analogous methods for computing the edge-Wiener index of benzenoid systems and phenylenes are greatly generalized. Moreover, reduction theorems are developed for the edge-Wiener index and the vertex-edge-Wiener index since they can be applied in order to compute a corresponding index of a (quotient) graph from the so-called reduced graph. Finally, the obtained results are used to find the closed formula for the edge-Wiener index of an infinite family of graphs.
}
\vspace{5mm}

\baselineskip=0.30in



\section{Introduction}
The \textit{Wiener index} of a connected graph $G$ is defined as 
$$W(G) = \sum_{\lbrace u,v \rbrace \subseteq V(G)}d_G(u,v) = \frac{1}{2}\sum_{u \in V(G)} \sum_{v \in V(G)} d_G(u,v),$$
where $d_G(u,v)$ represents the distance between vertices $u$ and $v$ of $G$ (naturally, $d_G(u,v)$ is defined as the number of edges on a shortest path from $u$ to $v$). This index was developed by Wiener in 1947 and  has many applications in chemistry and also in network theory  since it is used for analysing the structure of communication, social, and other networks \cite{redukcija}. It is also closely related to the average distance of a graph. As a consequence, the Wiener index was investigated in many papers (see \cite{knor1} for a survey).

On the other hand, the edge-Wiener index was introduced 10 years ago in papers \cite{dankelmann-2009,iranmanesh-2009,khalifeh-2009}. In~\cite{iranmanesh-2009} the authors concluded that the distance between edges $e,f$ of a graph $G$, here denoted by $d_G^0(e,f)$, should be defined as the distance between the vertices $e$ and $f$ in the line graph $L(G)$ of $G$, i.e.
$$d_G^0(e,f) = d_{L(G)}(e,f).$$
The reason for such a definition is in the fact that $(E(G),d_G^0)$ represents a metric space. Therefore, the \textit{edge-Wiener index} of a connected graph $G$ is defined as
$$W_e(G) =  \frac{1}{2}\sum_{e \in E(G)} \sum_{f \in E(G)} d_G^0(e,f).$$
We can easily see from the above definition that $W_e(G)$ is actually the Wiener index of the line graph of $G$, i.e.\ $W_e(G) = W(L(G))$. Because of this equality, many results on the edge-Wiener index were known before the index was defined (see \cite{dobrynin-1999,gutman-1996}). 

Nevertheless, if $e=xy$, $f=ab$ are two edges of a graph $G$, we can also set
$$d_G^1(e,f)= \min \lbrace d_G(x,a), d_G(x,b), d_G(y,a), d_G(y,b) \rbrace,$$
which was denoted by $\widehat{d}_G(e,f)$ in \cite{kelenc}. The above definition leads to a different version of the edge-Wiener index, denoted by $\widehat{W}_e(G)$, which is defined in the following way:
$$\widehat{W}_e(G)= \frac{1}{2}\sum_{e \in E(G)} \sum_{f \in E(G)} d_G^1(e,f).$$

\noindent
Note that in \cite{iranmanesh-2009} the numbers $W_e(G)$ and $\widehat{W}_e(G)$ were denoted by $W_{e0}(G)$ and $W_{e1}(G)$, respectively. It is easy to observe that for two edges $e,f$ of a graph $G$, $e \neq f$, it holds $d_G^0(e,f)=d_G^1(e,f) + 1$ and therefore, the following relation holds for $W_e(G)$ and $\widehat{W}_e(G)$ (see also \cite{iranmanesh-2009,khalifeh-2009}): 
\begin{equation}
\label{eq:simple-connection}
W_e(G) = \widehat{W}_e(G) + \binom{|E(G)|}{2}.
\end{equation}

For our purposes, $d_G^1$ turns out to be more  convenient than $d_G^0$. Hence, we also write $d_G$ instead of $d_G^1$, i.e.\ for any two edges $e,f \in E(G)$ we set
$$d_G(e,f)=d_G^1(e,f).$$  

For some of the recent discoveries on the edge-Wiener index the reader can refer to \cite{chen,knor-2014b,knor,soltani-2014} and also to a survey paper \cite{iranmanesh-2015}. Moreover, it is worth mentioning that the edge-Wiener index  of any connected graph $G$ is closely related to the \textit{edge-Hosoya polynomial} \cite{behm} of $G$, defined as $H_e(G,x) = \sum_{k \geq 0} d_e(G,k)\,x^k$, where $d_e(G,k)$ denotes the number of (unordered) edge pairs at distance $k$. 

The cut method is a very common and popular method for calculating molecular descriptors. Usually, it reduces the problem of calculating a topological index to the problem of calculating some indices of smaller graphs obtained by the edge cuts, see \cite{klavzar-2015} for a recent survey. This method is often applied on benzenoid systems, partial cubes, and other families of molecular graphs \cite{aroc_cle,tratnik1,trat_steiner}. In particular, some methods for computing the edge-Wiener index were developed, for instance, in \cite{arock,cre-trat1,yousefi-azari-2011}.

In \cite{kelenc} it has been shown that the edge-Wiener index of a benzenoid system can be computed by using the three Wiener indices of three weighted quotient trees obtained from elementary cuts. Later, a similar result has been established for phenylenes \cite{zigert-2018} (note that phenylenes and benzenoid systems represent important chemical graphs). In this paper, we generalize the results from \cite{kelenc,zigert-2018} and prove that the edge-Wiener index of an edge-weighted graph can be calculated in terms of the Wiener index, the edge-Wiener index, and the vertex-edge-Wiener index of weighted quotient graphs defined by a partition of the edge set that is coarser than the $\Theta^*$-partition (definitions of these concepts are included in the next section). Therefore, our method is not restricted to some specific family of graphs and neither to partial cubes, but can be applied on any graph with at least two $\Theta^*$-classes. Consequently, the mentioned result can be used to develop very fast algorithms for calculating the edge-Wiener index of important chemical graphs or networks and also to easily find formulas in the closed form for some families of graphs. Such methods were recently developed also for other distance-based topological indices: the Wiener index \cite{klavzar-2016}, the revised (edge-)Szeged index \cite{li}, the degree distance \cite{brez-trat}, the Graovac-Pisanski index \cite{tratnik_grao}. We should mention that instead of using our method for the edge-Wiener index one could use the method from \cite{klavzar-2016} on the line graph. However, it turns out that the line graphs of interesting networks often contain a small number of $\Theta^*$-classes and therefore, this method does not improve efficiency of calculating the edge-Wiener index on such graphs.

As already mentioned, with our method it is possible to calculate the edge-Wiener index by calculating the corresponding indices on quotient graphs. However, it turns out that in some cases such a quotient graph can be transformed into the  smaller graph in such a way that the indices of the first graph can be computed in terms of the corresponding indices of the reduced graph. In \cite{redukcija} it was shown that the Wiener index of a vertex-weighted graph can be computed from the Wiener index of a reduced graph. Therefore, we prove such results also for the edge-Wiener index and the vertex-edge-Wiener index.

The paper is divided into six sections. In the next section, we include some basic definitions and results that are needed later. In Section 3, it is firstly shown how the distance between two edges can be computed by using the corresponding distances in quotient graphs. In addition, we use this result to obtain the method for computing the edge-Wiener index. Moreover, the reduction theorems are proved in Section 4. Furthermore, in Section 5 the obtained results are applied to an infinite family of graphs in order to calculate the closed formula for the edge-Wiener index. Finally, some concluding remarks are mentioned in the last section.

\section{Preliminaries}

The graphs appearing in this paper are simple and finite. For a vertex $v$ of a graph $G$, we denote by $N_G(v)$, or shortly by $N(v)$, the set of vertices adjacent to $v$, i.e.\ $N_G(v) = \lbrace u \in V(G) \,|\,uv \in E(G) \rbrace$. The distance between two vertices and the two distances between two edges have already been defined in the previous section. In addition, the distance between a vertex $v \in V(G)$ and an edge $e=xy \in E(G)$ is 
\begin{equation*}
{d}_G(v,e) = \min \lbrace d_G(v,x), d_G(v, y) \rbrace\,.
\end{equation*}
The {\em vertex-edge-Wiener index} of a graph $G$ was defined in \cite{kelenc} as
$$W_{ve}(G) = \sum_{v \in V(G)} \sum_{e \in E(G)} d_G(v,e).$$
However, in~\cite{khalifeh-2009} the definition of $W_{ve}(G)$ was stated with factor $1/2$.
\smallskip

\noindent
Next, we introduce the Wiener index, both versions of the edge-Wiener index, and the vertex-edge-Wiener index of weighted graphs. Let $\mathbb{R}_0^+ = [0, \infty)$. Suppose $G$ is a graph and $w:V(G)\rightarrow {\mathbb R}_0^+$, $w_e:E(G)\rightarrow {\mathbb R}_0^+$ are some weights on the set of vertices and edges, respectively. We say that $(G,w)$ is the {\em vertex-weighted graph}, $(G,w_e)$ is the {\em edge-weighted graph}, and $(G,w,w_e)$ is the {\em vertex-edge-weighted graph}. Therefore, it is possible to define Wiener indices of such graphs as follows \cite{kelenc}:
\begin{eqnarray*}
W(G,w) & = & \frac{1}{2} \sum_{u \in V(G)} \sum_{v \in V(G)} w(u)w(v)d_G(u,v), \\
W_e(G,w_e) & = & \frac{1}{2} \sum_{e \in E(G)} \sum_{f \in E(G)} w_e(e)w_e(f)d_{G}^0(e,f),\\
\widehat{W}_e(G,w_e) & = & \frac{1}{2} \sum_{e \in E(G)} \sum_{f \in E(G)} w_e(e)w_e(f)d_G(e,f),\\
W_{ve}(G,w,w_e) & = & \sum_{v \in V(G)} \sum_{e \in E(G)} w(v)w_e(e){d}_G(v,e). 
\end{eqnarray*}

\noindent
Let $e = xy$ and $f = ab$ be two edges of a graph $G$. If $$d_G(x,a) + d_G(y,b) \neq d_G(x,b) + d_G(y,a),$$
we say that $e$ and $f$ are in relation $\Theta$ (also known as Djokovi\' c-Winkler relation) and write $e \Theta f$. Note that in some graphs this relation is not transitive (for example in odd cycles), although it is always reflexive and symmetric. As a consequence, we often consider the smallest transitive relation that contains relation $\Theta$ (i.e.\ the transitive closure of $\Theta$) and denote it by $\Theta^*$. It is known that in a \textit{partial cube}, which is defined as an isometric subgraph of some hypercube, relation $\Theta$ is always transitive, so $\Theta = \Theta^*$. Moreover, the class of partial cubes contains many interesting molecular graphs (for example benzenoid systems and phenylenes). For more information on partial cubes and relation $\Theta$ see \cite{klavzar-book}.
\smallskip

\noindent
Let $ \mathcal{E} = \lbrace E_1, \ldots, E_t \rbrace$ be the $\Theta^*$-partition of the edge set $E(G)$ and $ \mathcal{F} = \lbrace F_1, \ldots, F_r \rbrace$ an arbitrary partition of $E(G)$. If every element of $\mathcal{E}$ is a subset of some element of $\mathcal{F}$, we say that $\mathcal{F}$ is \textit{coarser} than $\mathcal{E}$. In such a case $\mathcal{F}$ will be shortly called a \textit{c-partition}. 
\smallskip

\noindent
Suppose $G$ is a graph and $E' \subseteq E(G)$ is some subset of its edges. The \textit{quotient graph} $G / E'$ is defined as the graph that has connected components of $G \setminus E'$ as vertices; two such components $X$ and $Y$ being adjacent in $G / E'$ if and only if some vertex from $X$ is
adjacent to a vertex from $Y$ in graph $G$. If $F=XY \in E(G/E')$ is an edge in graph $G/E'$, then we denote by $F$ also the set of edges of $G$ that have one end vertex in $X$ and the other end vertex in $Y$, i.e. $F=  \lbrace xy \in E(G)\,|\,x \in V(X), y \in V(Y) \rbrace $. 
\smallskip

\noindent
Let $G$ be a connected graph and let $\lbrace F_1, \ldots, F_r \rbrace$ be a c-partition of the set $E(G)$. For any $i \in \lbrace 1, \ldots, r \rbrace$, we define the function $\ell_i: V(G) \rightarrow V(G/F_i)$ as follows: for any $v \in V(G)$ let $\ell_i(v)$ be the connected component $X$ of the graph $G \setminus F_i$ such that $v \in V(X)$. The next lemma was obtained in \cite{klavzar-2016}, but the proof can be also found in \cite{tratnik_grao}.

\begin{lemma} \cite{klavzar-2016,tratnik_grao} \label{distance}
Suppose $G$ is a connected graph and $\lbrace F_1, \ldots, F_r \rbrace$ is a c-partition of the set $E(G)$. If $u,v \in V(G)$ are two vertices, then 
$$d_G(u,v) = \sum_{i=1}^r d_{G / F_i}(\ell_i(u),\ell_i(v)).$$
\end{lemma}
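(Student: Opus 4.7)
My strategy is to establish the equality by proving the two inequalities separately.

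For the direction $\sum_{i=1}^r d_{G/F_i}(\ell_i(u),\ell_i(v)) \le d_G(u,v)$ I would use a projection argument. Let $P = w_0 w_1 \cdots w_k$ be a shortest $u$-$v$ path in $G$, so $k = d_G(u,v)$. Fix $i \in \{1,\ldots,r\}$: the sequence $\ell_i(w_0),\ldots,\ell_i(w_k)$ forms a walk from $\ell_i(u)$ to $\ell_i(v)$ in $G/F_i$, since two consecutive vertices $w_j, w_{j+1}$ collapse to the same vertex of $G/F_i$ when $w_j w_{j+1} \notin F_i$, and lie in components joined by the edge $w_j w_{j+1}$ of $G/F_i$ when $w_j w_{j+1} \in F_i$. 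Thus $d_{G/F_i}(\ell_i(u),\ell_i(v)) \le |E(P) \cap F_i|$. Since $\{F_1, \ldots, F_r\}$ partitions $E(G)$, summing over $i$ yields
$$\sum_{i=1}^r d_{G/F_i}(\ell_i(u),\ell_i(v)) \le \sum_{i=1}^r |E(P) \cap F_i| = |E(P)| = d_G(u,v).$$

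For the reverse inequality I would argue by induction on $d_G(u,v)$. The base case $u=v$ is trivial. For the inductive step, pick a neighbor $w$ of $u$ on a shortest $u$-$v$ path, so $d_G(w,v) = d_G(u,v) - 1$, and apply the inductive hypothesis to the pair $(w, v)$. Let $j$ be the unique index with $uw \in F_j$; for $i \ne j$ the vertices $u$ and $w$ lie in the same component of $G \setminus F_i$, hence $\ell_i(u) = \ell_i(w)$ and the $i$-th terms on both sides of the target identity coincide. It then suffices to establish the strict-increase identity
$$d_{G/F_j}(\ell_j(u), \ell_j(v)) = d_{G/F_j}(\ell_j(w), \ell_j(v)) + 1,$$
for then $\sum_i d_{G/F_i}(\ell_i(u), \ell_i(v)) = \sum_i d_{G/F_i}(\ell_i(w), \ell_i(v)) + 1 = d_G(w,v) + 1 = d_G(u,v)$.

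The main obstacle is proving this strict-increase identity, and this is precisely where the hypothesis that $\mathcal{F}$ is coarser than the $\Theta^*$-partition becomes essential. The triangle inequality alone gives only $d_{G/F_j}(\ell_j(u), \ell_j(v)) \le d_{G/F_j}(\ell_j(w), \ell_j(v)) + 1$; to rule out the cases in which the two quotient distances are equal or in which $\ell_j(w)$ is strictly farther from $\ell_j(v)$, I would invoke the defining Djokovi\'c--Winkler condition $d_G(x,a) + d_G(y,b) \ne d_G(x,b) + d_G(y,a)$ for edges $xy\,\Theta\,ab$. This condition forces every $\Theta^*$-class to behave as an isometric edge-cut of $G$: crossing the $\Theta^*$-class containing $uw$ (which, by the c-partition hypothesis, sits entirely inside $F_j$) must alter the quotient distance by exactly $\pm 1$, and the choice of $w$ on a shortest $u$-$v$ path in $G$ fixes the sign as $+1$. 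Once the strict-increase identity is in hand, the induction closes and the two inequalities combine to give the claimed equality.
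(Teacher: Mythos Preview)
The paper does not prove this lemma; it cites it from \cite{klavzar-2016,tratnik_grao}. The arguments there rest on the Graham--Winkler canonical isometric embedding: one first establishes the identity for the $\Theta^*$-partition itself via the canonical embedding $G \hookrightarrow \prod_k G/E_k$, and then passes to an arbitrary c-partition. Your projection argument for the inequality $\sum_i d_{G/F_i}(\ell_i(u),\ell_i(v)) \le d_G(u,v)$ is correct and is exactly the easy half of that story.

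The gap is in the inductive step for the reverse direction. You correctly isolate the crux as the \emph{strict-increase identity}
\[
d_{G/F_j}(\ell_j(u),\ell_j(v)) = d_{G/F_j}(\ell_j(w),\ell_j(v)) + 1,
\]
but your justification does not prove it. Two specific problems: (a) the assertion that ``crossing the $\Theta^*$-class containing $uw$ must alter the quotient distance by exactly $\pm 1$'' is a statement about $G/E_{k_0}$, where $E_{k_0}$ is that single $\Theta^*$-class, yet the quotient you need to control is $G/F_j$ with $F_j \supseteq E_{k_0}$ possibly strictly larger; the passage from one to the other is not addressed. (b) Even the $\pm 1$ claim for a single $\Theta^*$-class is not derived from the Djokovi\'c--Winkler inequality alone---what that inequality gives you directly is only that two edges on a common shortest path are never $\Theta$-related, which is not the same as your edge-cut statement. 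In fact, once one unwinds what is required, the strict-increase identity is equivalent to the lemma itself (subtract the two instances of the identity at $u$ and at $w$), so the induction is circular unless you supply an independent argument. The missing ingredient is precisely the Graham--Winkler theorem: use it to get $d_G(u,v)=\sum_k d_{G/E_k}(\ell_k(u),\ell_k(v))$ for the $\Theta^*$-partition, note that for every $k$ there is a non-expansive map $G/F_i \to G/E_k$ whenever $E_k \subseteq F_i$, and combine this with your projection inequality to force equality termwise.
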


\section{The method for computing the edge-Wiener index}
\label{sec:main}

In this section we prove that the edge-Wiener index of a graph can be computed from the corresponding quotient graphs with specific weights on vertices and edges. Firstly, we define a function which maps any edge of $G$ either to a vertex or to an edge of a quotient graph. Note that our definition generalizes the corresponding definition from \cite{kelenc}.

\begin{definition}
Suppose $G$ is a connected graph and $\lbrace F_1,\ldots, F_r  \rbrace$ is a c-partition of the set $E(G)$. For any $i \in \lbrace 1, \ldots, r \rbrace$, we introduce the function  $\alpha_i : E(G) \rightarrow V(G / F_i) \cup E(G / F_i)$ by
\begin{equation*}
\alpha_i(xy) = \left\{ 
   \begin{array}{l l}
     \ell_i(x) \in V(G/F_i); & \quad \ell_i(x)=\ell_i(y)\,,\\
     \ell_i(x) \ell_i(y) \in E(G/F_i); & \quad \ell_i(x) \neq \ell_i(y)\,,
   \end{array} \right.
\end{equation*}
where $xy$ is an arbitrary edge of $G$.
\end{definition}

\noindent
We can now show how the distance between two edges can be computed from the distances in the quotient graphs. In the proof, we use some ideas from \cite{kelenc,li} but many new observations are needed as well.

\begin{theorem}
\label{thm:dist-between-edges}
If $G$ is a connected graph and $\lbrace F_1,\ldots, F_r  \rbrace$ a c-partition of the set $E(G)$, then for every $e,f \in E(G)$,
$${d}_G(e,f)= \sum_{i=1}^r {d}_{G/F_i}(\alpha_i(e), \alpha_i(f))\,.$$
\end{theorem}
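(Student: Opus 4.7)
The plan is to reduce the theorem to a min--sum interchange and then invoke Lemma~\ref{distance}. Write $e=xy$ and $f=ab$. A short case analysis based on whether $\alpha_i(e)$ and $\alpha_i(f)$ are vertices or edges, using that $\alpha_i(e)$ is a vertex exactly when $\ell_i(x)=\ell_i(y)$ and analogously for $f$, yields
\[
d_{G/F_i}(\alpha_i(e),\alpha_i(f)) \;=\; \min_{p\in\{x,y\},\, q\in\{a,b\}} d_{G/F_i}\bigl(\ell_i(p),\ell_i(q)\bigr)
\]
for every $i$. Combined with Lemma~\ref{distance} applied to each pair $(p,q)$, the theorem becomes the identity
\[
\min_{p,q}\sum_{i=1}^{r} d_{G/F_i}\bigl(\ell_i(p),\ell_i(q)\bigr) \;=\; \sum_{i=1}^{r}\min_{p,q} d_{G/F_i}\bigl(\ell_i(p),\ell_i(q)\bigr).
\]
The inequality $\ge$ in this identity holds automatically, so the real task is to produce a single pair $(p^*,q^*)\in\{x,y\}\times\{a,b\}$ that simultaneously realizes the inner minimum for every $i$.

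To construct such $(p^*,q^*)$ I would argue as follows. Since $\mathcal{F}$ partitions $E(G)$, each of $e$ and $f$ lies in exactly one class; write $e\in F_{i_0}$ and $f\in F_{j_0}$. Because $\mathcal{F}$ is coarser than the $\Theta^{*}$-partition, removing $F_i$ from $G$ separates the endpoints of every edge of $F_i$, so $\ell_i(x)\ne\ell_i(y)$ occurs exactly for $i=i_0$ and $\ell_i(a)\ne\ell_i(b)$ exactly for $i=j_0$. Consequently, for every $i\notin\{i_0,j_0\}$ all four values $d_{G/F_i}(\ell_i(p),\ell_i(q))$ coincide and the choice of $(p^*,q^*)$ is irrelevant there. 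The constraint at $i_0$ depends only on $p^*$, the constraint at $j_0$ depends only on $q^*$, and in the coincident case $i_0=j_0$ they merge into a single joint constraint that is satisfied by any pair minimizing $d_{G/F_{i_0}}(\ell_{i_0}(p),\ell_{i_0}(q))$. In every case a $(p^*,q^*)$ simultaneously attaining all per-index minima exists.

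With such $(p^*,q^*)$ in hand, Lemma~\ref{distance} gives
\[
d_G(p^*,q^*) \;=\; \sum_{i=1}^{r} d_{G/F_i}\bigl(\ell_i(p^*),\ell_i(q^*)\bigr) \;=\; \sum_{i=1}^{r} d_{G/F_i}(\alpha_i(e),\alpha_i(f)),
\]
and $d_G(e,f)\le d_G(p^*,q^*)$ by definition of $d_G^{1}$ supplies the missing inequality. The main obstacle I expect is the decoupling argument in the middle paragraph: verifying that the constraints contributed by different members of $\mathcal{F}$ are independent. This rests on the fact that every edge of $G$ lies in a unique class of $\mathcal{F}$ and that this class separates its endpoints in the associated quotient, a property specific to c-partitions.
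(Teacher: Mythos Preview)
Your proposal is correct, and it rests on the same core observation as the paper's proof: for each edge $e=xy$, the condition $\ell_i(x)\ne\ell_i(y)$ can hold for at most one index $i$, so the per-index minima over $(p,q)\in\{x,y\}\times\{a,b\}$ decouple. The paper, however, runs the argument in the opposite direction. It fixes a pair $(x,a)$ realizing the global minimum $d_G(e,f)=d_G(x,a)$ and then verifies, through a four-case analysis with explicit difference computations via Lemma~\ref{distance}, that $d_{G/F_i}(\ell_i(x),\ell_i(a))=d_{G/F_i}(\alpha_i(e),\alpha_i(f))$ for every $i$. Your route is bottom-up: you first record the identity $d_{G/F_i}(\alpha_i(e),\alpha_i(f))=\min_{p,q}d_{G/F_i}(\ell_i(p),\ell_i(q))$, then construct a single pair $(p^*,q^*)$ attaining all per-index minima simultaneously, and finish with Lemma~\ref{distance}. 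The two arguments are dual to one another; yours is somewhat more conceptual and makes the min--sum interchange explicit, while the paper's avoids stating the auxiliary minimum formula. One small remark: the separation claim ``$\ell_{i_0}(x)\ne\ell_{i_0}(y)$'' that you flag as the main obstacle is not strictly needed---``at most one $i$'' suffices for the decoupling, and that follows from $\{F_i\}$ merely being a partition; the c-partition hypothesis enters only through Lemma~\ref{distance}.
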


\proof
Let $e=xy$ and $f=ab$. Moreover, we can assume (without loss of generality) that ${d}_G(e,f)={d}_G(x,a)$. By Lemma \ref{distance} it follows
$$d_G(e,f) = \sum_{i=1}^r d_{G/F_i}(\ell_i(x),\ell_i(a)).$$
To finish the proof, we show that  ${d}_{G/F_i}(\alpha_i(e), \alpha_i(f)) = d_{G/F_i}(\ell_i(x),\ell_i(a))$ for any $i \in \lbrace 1, \ldots, r \rbrace$. Therefore, choose an arbitrary $i \in \lbrace 1, \ldots, r \rbrace$ and consider the following four cases.
\begin{itemize}
\item [] {\bf Case 1.} $\ell_i(x) = \ell_i(y)$ and $\ell_i(a) = \ell_i(b)$.\\
We obtain $\alpha_i(e) = \ell_i(x)$ and $\alpha_i(f)=\ell_i(a)$. Therefore, the result follows.

\item [] {\bf Case 2.} $\ell_i(x) \neq \ell_i(y)$ and $\ell_i(a) = \ell_i(b)$.\\
Obviously, $e \in F_i$ and for any $j \in \lbrace 1,\ldots,r \rbrace$, $j \neq i$, it holds $\ell_j(x)=\ell_j(y)$. Thus, we obtain $ d_{G/F_j}(\ell_j(x),\ell_j(a)) = d_{G/F_j}(\ell_j(y),\ell_j(a))$ for all $j \neq i$. Moreover, by Lemma \ref{distance} it follows
\begin{eqnarray*}
d_G(x,a) - d_G(y,a) & = & \sum_{j=1}^r d_{G/F_j}(\ell_j(x),\ell_j(a)) - \sum_{j=1}^r d_{G/F_j}(\ell_j(y),\ell_j(a)) \\
& = & \sum_{j=1}^r \big( d_{G/F_j}(\ell_j(x),\ell_j(a)) - d_{G/F_j}(\ell_j(y),\ell_j(a)) \big) \\
& = & d_{G/F_i}(\ell_i(x),\ell_i(a)) - d_{G/F_i}(\ell_i(y),\ell_i(a)).
\end{eqnarray*}
Since $d_G(x,a) \leq d_G(y,a)$, we now deduce $d_{G/F_i}(\ell_i(x),\ell_i(a)) \leq d_{G/F_i}(\ell_i(y),\ell_i(a))$ and therefore, $$d_{G/F_i}(\alpha_i(e),\alpha_i(f))=d_{G/F_i}(\ell_i(x)\ell_i(y),\ell_i(a)) = d_{G/F_i}(\ell_i(x),\ell_i(a)).$$

\item [] {\bf Case 3.} $\ell_i(x) = \ell_i(y)$ and $\ell_i(a) \neq \ell_i(b)$.\\
This case is similar to Case 2.

\item [] {\bf Case 4.} $\ell_i(x) \neq \ell_i(y)$ and $\ell_i(a) \neq \ell_i(b)$.\\
It is clear that $e,f \in F_i$ and for any $j \in \lbrace 1,\ldots,r \rbrace$, $j \neq i$, it holds $\ell_j(x)=\ell_j(y)$ and $\ell_j(a)=\ell_j(b)$. Suppose that $d_G(e,b) = d_G(z,b)$, where $z \in \lbrace x,y \rbrace$. We can calculate
\begin{eqnarray*}
d_G(e,a) - d_G(e,b) & = & d_G(x,a) - d_G(z,b) \\
& = & \sum_{j=1}^r d_{G/F_j}(\ell_j(x),\ell_j(a)) - \sum_{j=1}^r d_{G/F_j}(\ell_j(z),\ell_j(b)) \\
& = & \sum_{j=1}^r \big( d_{G/F_j}(\ell_j(x),\ell_j(a)) - d_{G/F_j}(\ell_j(z),\ell_j(b)) \big) \\
& = & d_{G/F_i}(\ell_i(x),\ell_i(a)) - d_{G/F_i}(\ell_i(z),\ell_i(b)). 
\end{eqnarray*}
Since $d_G(e,a) = d_G(x,a) \leq d_G(e,b)$, we now get 
\begin{equation} \label{neenakost}
d_{G/F_i}(\ell_i(x),\ell_i(a)) \leq d_{G/F_i}(\ell_i(z),\ell_i(b)).
\end{equation}
Obviously, $d_G(x,a) \leq d_G(y,a)$, $d_G(z,b) \leq d_G(x,b)$, and $d_G(z,b) \leq d_G(y,b)$. By using similar reasoning as in Case 2, we can show
\begin{eqnarray*}
d_{G/F_i}(\ell_i(x),\ell_i(a)) & \leq & d_{G/F_i}(\ell_i(y),\ell_i(a)), \\
d_{G/F_i}(\ell_i(z),\ell_i(b)) & \leq & d_{G/F_i}(\ell_i(x),\ell_i(b)), \\
d_{G/F_i}(\ell_i(z),\ell_i(b)) & \leq & d_{G/F_i}(\ell_i(y),\ell_i(b)).
\end{eqnarray*}
From these inequalities and from inequality \eqref{neenakost} we conclude $$d_{G/F_i}(\alpha_i(e),\alpha_i(f))=d_{G/F_i}(\ell_i(x)\ell_i(y),\ell_i(a)\ell_i(b)) = d_{G/F_i}(\ell_i(x),\ell_i(a)).$$
\end{itemize}

In each case it holds ${d}_{G/F_i}(\alpha_i(e), \alpha_i(f)) = d_{G/F_i}(\ell_i(x),\ell_i(a))$, which is what we wanted to prove. 
\qed

Let $(G,w_e)$ be a connected edge-weighted graph and $\lbrace F_1,\ldots, F_r  \rbrace$ a c-partition of the set $E(G)$. The graphs $G/F_i$, $i \in \lbrace 1, \ldots,r \rbrace$, are extended to weighted graphs $(G/F_i, w^i)$, $(G/F_i, w_e^i)$, $(T_i, w^i, w_e^i)$ in the following way: 
\begin{itemize}
\item for each $X \in V(G/F_i)$, we define $w^i(X)$ as the sum of all the weights of edges in the connected component $X$ of $G \setminus F_i$, i.e.\ $w^i(X) = \sum_{e \in E(X)}w_e(e)$;
\item for each $F = XY \in E(G/F_i)$, we define $w_e^i(F)$ as the sum of all the weights of edges in $G$ that have one end vertex in $X$ and the other end vertex in $Y$, i.e.\ $w_e^i(F) = \sum_{e \in F}w_e(e)$.

\end{itemize}
The main theorem can now be stated. 

\begin{theorem}
\label{thm:edge-Wiener}
If $(G,w_e)$ is an edge-weighted connected graph and $\lbrace F_1,\ldots, F_r  \rbrace$ a c-partition of the set $E(G)$, then
$$\widehat{W}_e(G,w_e)= \sum_{i=1}^r \left( W(G/F_i, w^i) +  \widehat{W}_e(G/F_i, w_e^i) + W_{ve}(G/F_i, w^i, w_e^i)\right).$$
\end{theorem}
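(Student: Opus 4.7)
The plan is to expand the definition of $\widehat{W}_e(G,w_e)$, substitute the edge-distance formula from Theorem \ref{thm:dist-between-edges}, swap the order of summation, and then regroup the resulting sums according to the image of $\alpha_i$.

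First I would write
\begin{equation*}
\widehat{W}_e(G,w_e) = \frac{1}{2} \sum_{e \in E(G)} \sum_{f \in E(G)} w_e(e) w_e(f) \, d_G(e,f),
\end{equation*}
and apply Theorem \ref{thm:dist-between-edges} to replace $d_G(e,f)$ by $\sum_{i=1}^r d_{G/F_i}(\alpha_i(e), \alpha_i(f))$. Pulling the sum over $i$ to the outside, the problem reduces to showing that, for each fixed $i$,
\begin{equation*}
\frac{1}{2}\sum_{e,f \in E(G)} w_e(e) w_e(f) \, d_{G/F_i}(\alpha_i(e),\alpha_i(f)) = W(G/F_i,w^i) + \widehat{W}_e(G/F_i,w_e^i) + W_{ve}(G/F_i,w^i,w_e^i).
\end{equation*}

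Next I would fix $i$ and partition $E(G)$ into two blocks: edges $e$ with $\ell_i(x)=\ell_i(y)$, for which $\alpha_i(e)$ is a vertex of $G/F_i$, and edges of $F_i$, for which $\alpha_i(e)$ is an edge of $G/F_i$. From the definition of $w^i$ and $w_e^i$, the preimages under $\alpha_i$ satisfy
\begin{equation*}
\sum_{e \in E(G),\, \alpha_i(e)=X} w_e(e) = w^i(X) \quad \text{for } X \in V(G/F_i), \qquad \sum_{e \in E(G),\, \alpha_i(e)=F} w_e(e) = w_e^i(F) \quad \text{for } F \in E(G/F_i).
\end{equation*}
Splitting the double sum over $(e,f)$ into the four cases (vertex-vertex, vertex-edge, edge-vertex, edge-edge) and substituting the above identities gathers the weight factors into $w^i$ or $w_e^i$ at the quotient level.

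The four resulting sums are then directly recognized: the vertex-vertex case yields $\frac{1}{2}\sum_{X,Y} w^i(X) w^i(Y) d_{G/F_i}(X,Y) = W(G/F_i,w^i)$, the edge-edge case yields $\widehat{W}_e(G/F_i,w_e^i)$, and the two mixed cases are equal (by symmetry of the distance) and combine with the factor $\tfrac{1}{2}$ to give $W_{ve}(G/F_i,w^i,w_e^i)$. Summing over $i$ completes the proof.

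The main obstacle is purely bookkeeping: one must confirm that the weights on the quotient are defined precisely so that the preimage-sum identities above hold, and that the mixed sums really give the un-halved $W_{ve}$. No deeper ingredients are required beyond Theorem \ref{thm:dist-between-edges}; once the four-way split is set up correctly the identification with the three indices is immediate.
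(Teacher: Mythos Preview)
Your proposal is correct and follows essentially the same approach as the paper: apply Theorem~\ref{thm:dist-between-edges}, interchange the order of summation, split the double sum according to whether $\alpha_i(e)$ and $\alpha_i(f)$ land in $V(G/F_i)$ or $E(G/F_i)$, collapse the preimage sums into the weights $w^i$, $w_e^i$, and identify the resulting pieces with the three weighted Wiener-type indices. The only cosmetic difference is that the paper writes three cases (both vertices, both edges, mixed) with the mixed case already carrying coefficient $1$, whereas you write four cases and then merge the two symmetric mixed cases; the content is identical.
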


\proof
By Theorem~\ref{thm:dist-between-edges} we get 
\begin{eqnarray*}
\widehat{W}_e(G,w_e) & = & \frac{1}{2} \sum_{e \in E(G)} \sum_{f \in E(G)} w_e(e)w_e(f){d}_G(e,f) \\
 &= & \frac{1}{2} \sum_{e \in E(G)} \sum_{f \in E(G)} w_e(e)w_e(f) \Bigg( \sum_{i = 1}^r {d}_{G/F_i}(\alpha_i(e), \alpha_i(f)) \Bigg) \\
  & = & \sum_{i=1}^r \Bigg( \frac{1}{2} \sum_{e \in E(G)} \sum_{f \in E(G)} w_e(e)w_e(f) {d}_{G/F_i}(\alpha_i(e), \alpha_i(f)) \Bigg)\,.
\end{eqnarray*}

For any $i \in \lbrace 1, \ldots, r \rbrace $, we denote by $E_1^i$ and $E_2^i$ the set of edges of $G$ that are mapped by function $\alpha_i$ to a vertex or to an edge, respectively. More precisely,
$$E_1^i(G) = \lbrace e \in E(G) \,|\,\alpha_i(e) \in V(G/F_i)\rbrace, \quad E_2^i(G) = \lbrace e \in E(G) \,|\,\alpha_i(e) \in E(G/F_i)\rbrace. $$

\noindent
Obviously, it holds $E_1^i \cup E_2^i = E(G)$ and $E_1^i \cap E_2^i = \emptyset$ for any $i \in \lbrace 1, \ldots, r \rbrace $. Therefore, for two distinct edges of $G$ we have three possibilities: both edges belong to $E_1^i$, both edges belong to $E_2^i$, or one edge belongs to $E_1^i$ and the other belongs to $E_2^i$. Consequently, we arrive to the next formula:
\begin{eqnarray*}
\widehat{W}_e(G,w_e) & =  &   
  \sum_{i=1}^r \Bigg( \frac{1}{2} \sum_{e \in E_1^i(G)} \sum_{f \in E_1^i(G)} w_e(e)w_e(f) {d}_{G/F_i}(\alpha_i(e), \alpha_i(f)) \\
 &+ &  \frac{1}{2} \sum_{e \in E_2^i(G)} \sum_{f \in E_2^i(G)} w_e(e)w_e(f) {d}_{G/F_i}(\alpha_i(e), \alpha_i(f)) \\
 & + &  \sum_{e \in E_1^i(G)} \sum_{f \in E_2^i(G) } w_e(e)w_e(f) {d}_{G/F_i}(\alpha_i(e), \alpha_i(f)) \Bigg)\,. 
\end{eqnarray*}

\noindent
Let $X,Y$ be two arbitrary distinct connected components of $G \setminus F_i$. Obviously, for any $e,e' \in E(X)$ and $f,f' \in E(Y)$ it holds $d_{G/F_i}(X,Y)=d_{G/F_i}(\alpha_i(e),\alpha_i(f))=d_{G/F_i}(\alpha_i(e'),\alpha_i(f'))$. Moreover,
\begin{eqnarray*}
\sum_{e \in E(X)} \sum_{f \in E(Y)} w_e(e)w_e(f){d}_{G/F_i}(\alpha_i(e),\alpha_i(f)) & = & {d}_{G/F_i} (X,Y) \sum_{e \in E(X)}w_e(e)\sum_{f \in E(Y)}w_e(f) \\ 
& = & w^i(X)w^i(Y) {d}_{G/F_i} (X,Y). 
\end{eqnarray*}

\noindent
Let $E$, $F$ be two arbitrary distinct edges of the graph $G/F_i$. Obviously, for any $e,e' \in E$ and $f, f' \in F$ it holds $d_{G/F_i}(E,F)=d_{G/F_i}(\alpha_i(e),\alpha_i(f))=d_{G/F_i}(\alpha_i(e'),\alpha_i(f'))$. Moreover,
\begin{eqnarray*}
\sum_{e \in E} \sum_{f \in F} w_e(e)w_e(f){d}_{G/F_i}(\alpha_i(e),\alpha_i(f)) & = & {d}_{G/F_i} (E,F) \sum_{e \in E}w_e(e)\sum_{f \in F}w_e(f) \\ 
& = & w_e^i(E)w_e^i(F) {d}_{G/F_i} (E,F). 
\end{eqnarray*}

\noindent
Finally, let $X$ be a connected component of $G \setminus F_i$ and $F$ an edge of the graph $G/F_i$. Obviously, for any $e,e' \in E(X)$ and $f,f' \in F$ it holds $d_{G/F_i}(X,F)=d_{G/F_i}(\alpha_i(e),\alpha_i(f))=d_{G/F_i}(\alpha_i(e'),\alpha_i(f'))$. Moreover,
\begin{eqnarray*}
\sum_{e \in E(X)} \sum_{f \in F} w_e(e)w_e(f){d}_{G/F_i}(\alpha_i(e),\alpha_i(f)) & = & {d}_{G/F_i} (X,F) \sum_{e \in E(X)}w_e(e)\sum_{f \in F}w_e(f) \\ 
& = & w^i(X)w_e^i(F) {d}_{G/F_i} (X,F). 
\end{eqnarray*}

\noindent
From the obtained observations we finally conclude 
\begin{eqnarray*}
\widehat{W}_e(G,w_e) & = & 
\sum_{i=1}^r \Bigg(  \frac{1}{2} \sum_{X \in V(G/F_i)} \sum_{Y \in V(G/F_i)} w^i(X)w^i(Y){d}_{G/F_i}(X,Y)\\
& + & \frac{1}{2} \sum_{E \in E(G/F_i)} \sum_{F \in E(G/F_i)} w_e^i(E)w_e^i(F){d}_{G/F_i}(E, F)  \\
 & + &   \sum_{X \in V(G/F_i)} \sum_{F \in E(G/F_i)} w^i(X)w_e^i(F){d}_{G/F_i}(X, F) \Bigg) \\ 
 & = & \sum_{i=1}^r \left(  W(G/F_i, w^i) + \widehat{W}_e(G/F_i, w_e^i) + W_{ve}(G/F_i, w^i, w_e^i)\right).
\end{eqnarray*} \qed

If we set $w_e(e)=1$ for all $e \in E(G)$, the following corollary follows by equation \eqref{eq:simple-connection}.

\begin{corollary}
\label{posl_izreka}
If $G$ is a connected graph and $\lbrace F_1,\ldots, F_r  \rbrace$ a c-partition of the set $E(G)$, then
$$W_e(G)= \sum_{i=1}^r \left( W(G/F_i, w^i) +  \widehat{W}_e(G/F_i, w_e^i) + W_{ve}(G/F_i, w^i, w_e^i)\right) + {|E(G)| \choose 2},$$
where $w^i: V(G/F_i) \rightarrow \mathbb{R}_0^+$, $w_e^i: E(G/F_i) \rightarrow \mathbb{R}_0^+$ are defined as follows: $w^i(X)$ is the number of edges in the connected component $X$ and $w_e^i(XY)$ is the number of edges in $G$ that have one end vertex in $X$ and the other end vertex in $Y$.
\end{corollary}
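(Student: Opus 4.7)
The plan is to specialize Theorem~\ref{thm:edge-Wiener} to unit edge weights and then apply equation~\eqref{eq:simple-connection}. First I set $w_e(e) = 1$ for every $e \in E(G)$. Under this choice, the definition $\widehat{W}_e(G, w_e) = \frac{1}{2}\sum_{e,f} w_e(e)w_e(f)\,d_G(e,f)$ reduces immediately to the ordinary $\widehat{W}_e(G)$, so the left-hand side of Theorem~\ref{thm:edge-Wiener} becomes $\widehat{W}_e(G)$.

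Next I check that the induced weights on each quotient graph $G/F_i$ take the explicit combinatorial forms asserted in the corollary. From the definitions given just before Theorem~\ref{thm:edge-Wiener}, $w^i(X) = \sum_{e \in E(X)} w_e(e)$ and $w_e^i(F) = \sum_{e \in F} w_e(e)$; with $w_e \equiv 1$ these collapse to the number of edges in the component $X$ of $G \setminus F_i$ and the number of edges of $G$ joining the two components corresponding to the endpoints of $F$, respectively, which is exactly what the corollary prescribes.

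With these identifications, Theorem~\ref{thm:edge-Wiener} yields
$$\widehat{W}_e(G) = \sum_{i=1}^r \left( W(G/F_i, w^i) + \widehat{W}_e(G/F_i, w_e^i) + W_{ve}(G/F_i, w^i, w_e^i)\right).$$
Adding $\binom{|E(G)|}{2}$ to both sides and invoking equation~\eqref{eq:simple-connection}, namely $W_e(G) = \widehat{W}_e(G) + \binom{|E(G)|}{2}$, produces the desired formula.

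I anticipate no substantive obstacle: the corollary is a direct specialization of the main theorem combined with the standard shift identity~\eqref{eq:simple-connection}. The only bookkeeping step is confirming that the weight recipes named in the corollary are precisely the ones produced by plugging $w_e \equiv 1$ into the general recipes, which is immediate from the definitions.
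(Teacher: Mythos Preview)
Your proof is correct and follows essentially the same approach as the paper: specialize Theorem~\ref{thm:edge-Wiener} to the constant weight $w_e\equiv 1$ and then apply equation~\eqref{eq:simple-connection}. The paper's own argument is the one-line remark preceding the corollary, and your write-up merely unpacks the details (checking that the induced quotient weights become edge counts) that the paper leaves implicit.
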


\section{Reduction theorems}

It was shown in \cite{redukcija} that in some cases the computation of $W(G,w)$ can be done by computing the Wiener index of a smaller graph obtained by a special reduction. Such a reduction can be defined, for example, by the so-called relation $R$, which is significant in various research areas (vertices in this relation are sometimes called \textit{twin vertices}). In this section, we develop analogous results for $\widehat{W}_e(G,w_e)$ and for $W_{ve}(G,w,w_e)$, since these indices are needed to efficiently compute the edge-Wiener index in terms of Corollary \ref{posl_izreka}. An example showing how the mentioned reductions can be used is provided in the next section.
\smallskip

\noindent
Let $G$ be a graph and let $u$, $v$ be two vertices of $G$. If $N(u) = N(v)$, then $u$ and $v$ are in relation $R$. It is easy to check that $R$ is an equivalence
relation on the set $V(G)$. The equivalence class of a vertex $v$ with respect to relation $R$ is denoted by $[v]_R$. The following theorem was obtained in \cite{redukcija} 

\begin{theorem} \cite{redukcija} \label{posebna_redukcija0}
Let $(G,w)$ be a connected vertex-weighted graph, $c \in V(G)$, and $C=[c]_R = \{c_1,\ldots,c_k\}$. If $(G',w')$ is defined by $G'=G \setminus (C \setminus \lbrace c \rbrace)$, $w'(c) = \sum_{x \in C}w(x)$, and $w'(v)=w(v)$ for all $v \in V(G) \setminus C$, then $$W(G,w)= W(G',w') + \sum_{\{c_i,c_j\} \subseteq C}2w(c_i)w(c_j).$$
Moreover, if $w(c_i)=a$ for any $i \in \lbrace 1, \ldots, k \rbrace$, $a \in \mathbb{R}_0^+$, then $${W}(G,w)= {W}(G',w') + a^2k(k-1).$$
\end{theorem}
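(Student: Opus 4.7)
The plan is to split the defining double sum of $W(G,w)$ according to how many endpoints of the pair $\{u,v\}$ lie in the class $C$, and to match the three resulting pieces against the corresponding contributions to $W(G',w')$ plus the correction term. Before doing the accounting, I would record three elementary consequences of the hypothesis $N(c_i)=N(c_j)$ for all $i,j\in\{1,\ldots,k\}$: (i) no two vertices of $C$ are adjacent (if $c_i\in N(c_j)$, then $N(c_i)=N(c_j)$ forces $c_i\in N(c_i)$, impossible in a simple graph), so any $c_i,c_j\in C$ with $i\neq j$ share every neighbour of $c$ and hence $d_G(c_i,c_j)=2$; (ii) for every $v\in V(G)\setminus C$ and every $c_i\in C$, the value $d_G(c_i,v)$ is independent of $i$ and equals $d_{G'}(c,v)$, because swapping $c_i\leftrightarrow c_j$ at the start of a shortest path preserves its validity and length; (iii) for $u,v\in V(G)\setminus C$, one has $d_G(u,v)=d_{G'}(u,v)$: the inequality $\geq$ is immediate from $G'\subseteq G$, and for $\leq$ one takes a shortest $uv$-path in $G$ and replaces every occurrence of a vertex $c_i\in C\setminus\{c\}$ by $c$, producing a walk of the same length in $G'$ since both of the path-neighbours of $c_i$ lie in $N(c_i)=N(c)$.

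With these in hand, I would write
\begin{equation*}
W(G,w) = A + B + D,
\end{equation*}
where $A$ sums over $\{u,v\}\subseteq V(G)\setminus C$, $B$ over pairs with exactly one endpoint in $C$, and $D$ over $\{c_i,c_j\}\subseteq C$. Observation (iii), combined with $w'=w$ on $V(G)\setminus C=V(G')\setminus\{c\}$, identifies $A$ with the contribution of $V(G')\setminus\{c\}$ to $W(G',w')$. For $B$, observation (ii) allows the factoring
\begin{equation*}
B = \sum_{v\in V(G)\setminus C} w'(v)\,d_{G'}(c,v) \sum_{c_i\in C} w(c_i) = w'(c)\sum_{v\in V(G')\setminus\{c\}} w'(v)\,d_{G'}(c,v),
\end{equation*}
which is exactly the contribution of $c$ to $W(G',w')$. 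Hence $A+B = W(G',w')$. Observation (i) yields $D = 2\sum_{\{c_i,c_j\}\subseteq C} w(c_i)w(c_j)$, giving the main identity; the \emph{moreover} clause follows by evaluating $2\binom{k}{2}a^2 = a^2k(k-1)$ in the uniform-weight case.

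The only point requiring real attention is the distance-preservation claim (iii), since a priori deleting $C\setminus\{c\}$ could lengthen shortest paths between remaining vertices. The equal-neighbourhood hypothesis is precisely what rules this out: every detour through a vertex $c_i\in C\setminus\{c\}$ is rerouted, edge for edge, through $c$, which survives in $G'$. Note that the rerouted object is a walk rather than a path (it may revisit $c$ when the original path used several $c_i$'s), but a walk of length $\ell$ from $u$ to $v$ in $G'$ already suffices to bound $d_{G'}(u,v)\leq\ell$. Everything else is bookkeeping once the decomposition $W(G,w)=A+B+D$ is in place.
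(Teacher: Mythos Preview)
Your argument is correct. Note, however, that the paper does not supply its own proof of this theorem: it is quoted from \cite{redukcija} (with the remark that the same argument goes through for weights in $\mathbb{R}_0^+$), so there is no in-paper proof to compare against. That said, your decomposition into $A+B+D$ together with the three distance observations $(i)$--$(iii)$ exactly mirrors the strategy the paper uses for the companion results Theorems~\ref{posebna_redukcija1} and~\ref{posebna_redukcija2}, where analogous properties $(i)$--$(iv)$ are listed and the defining sum is split according to how many arguments lie in $C$ or $I(C)$; your write-up would fit seamlessly alongside those proofs.
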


However, in \cite{redukcija} it was assumed that a weight $w$ is a function to the set $\mathbb{R}^+ = (0, \infty)$, but the same proof works also when $w: V(G) \rightarrow \mathbb{R}_0^+$.

For the rest of the section, we define the graph $G'$ as in Theorem \ref{posebna_redukcija0}. For any $c \in V (G)$, let $C = [c]_R = \lbrace c_1, \ldots, c_k \rbrace$ and $G'$ the graph defined by $G' = G \setminus (C \setminus \{c \})$. Let us denote $N(c)= N(c_i)= \lbrace n_1, \ldots, n_s \rbrace$ for any $i \in \lbrace 1, \ldots,k \rbrace$. Moreover, we denote
$$I(c_i) = \lbrace c_i n_j \in E(G)\,|\,j \in \lbrace 1, \ldots, s \rbrace\rbrace,$$
$$I(n_j) = \lbrace c_i n_j \in E(G)\,|\,i \in \lbrace 1, \ldots, k \rbrace\rbrace$$
for any $i \in \lbrace 1, \ldots,k \rbrace$ or $j \in \lbrace 1, \ldots, s \rbrace$. In other words, $I(c_i)$ is the set of edges that are incident with vertex $c_i$ and $I(n_j)$ is the set of edges that are incident to $n_j$ and a vertex from $C$. We also set $I(C) = \cup_{i \in \lbrace 1, \ldots,k \rbrace} I(c_i) = \cup_{j \in \lbrace 1, \ldots,s \rbrace} I(n_j)$, which represents the set of edges that are incident to a vertex of $C$. In addition, for any $i \in \lbrace 1,\ldots,k \rbrace$, $j \in \lbrace 1,\ldots,s \rbrace$, let $I(C)_{ij}= I(C) \setminus (I(c_i) \cup I(n_j))$.

Furthermore, for any weight $w:V(G) \rightarrow \mathbb{R}_0^+$ we define the weight $w': V(G') \rightarrow \mathbb{R}_0^+$ by $w'(c)= \sum_{i=1}^k w(c_i)$ and $w'(v)=w(v)$ for any $v \in V(G) \setminus C$. Finally, for any weight $w_e: E(G) \rightarrow \mathbb{R}_0^+$ we define the weight $w_e': E(G') \rightarrow \mathbb{R}_0^+$ in the following way: $w_e'(cn_j)= \sum_{i=1}^k w_e(c_in_j)$, $j \in \lbrace 1, \ldots, s \rbrace$, and $w_e'(e) = w_e(e)$
for any $e \in E(G) \setminus I(C)$. Now we can state our results.

\begin{theorem} \label{posebna_redukcija1}
If $(G,w_e)$ is a connected edge-weighted graph, $c \in V(G)$, $C=[c]_R = \{c_1,\ldots,c_k\}$, and $N(c)= \lbrace n_1, \ldots, n_s \rbrace$, then $$\widehat{W}_e(G,w_e)= \widehat{W}_e(G',w_e') + \frac{1}{2} \sum_{c_in_j \in I(C)} \sum_{e \in I(C)_{ij}} w_e(c_in_j)w_e(e).$$
Moreover, if $w_e(e)=a$ for all $e \in I(C)$, $a \in \mathbb{R}_0^+$, then $$\widehat{W}_e(G,w_e)= \widehat{W}_e(G',w_e') + \frac{a^2ks}{2}(k-1)(s-1).$$
\end{theorem}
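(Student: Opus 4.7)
The plan is to expand $2\widehat{W}_e(G,w_e)$ and $2\widehat{W}_e(G',w_e')$ by splitting each double sum according to whether each of the two edges lies in $I(C)$ or in $E(G)\setminus I(C)$, and to show that the two expansions agree term by term except for one surplus contribution coming from pairs $(c_in_j,c_{i'}n_{j'})$ with $i\neq i'$ and $j\neq j'$.

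The cornerstone is a twin-vertex distance lemma. Since $N_G(c_i)=N_G(c)$ for every $i$, any shortest $G$-walk from a vertex outside $C$ to another vertex outside $C$ that passes through some $c_i$ enters and leaves via $N(c)$, so replacing that interior occurrence by $c$ yields a walk of the same length inside $G'$. This forces $d_G(u,v)=d_{G'}(u,v)$ for all $u,v\in V(G)\setminus C$ and, more generally, $d_G(c_i,v)=d_G(c,v)$ whenever $v\in V(G)\setminus C$. I also record the trivial observation $C\cap N(c)=\emptyset$, for otherwise some $c_i$ would be adjacent to itself. Translating these equalities through $d_G(e,f)=\min\{d_G(x,a),d_G(x,b),d_G(y,a),d_G(y,b)\}$, two of the three blocks of the split are immediate. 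If $e,f\in E(G)\setminus I(C)$, all four endpoints lie outside $C$, so $d_G(e,f)=d_{G'}(e,f)$ while $w_e'$ agrees with $w_e$ on these edges, matching contributions directly. If $e=c_in_j\in I(C)$ and $f\in E(G)\setminus I(C)$, the endpoints of $f$ lie outside $C$, hence $d_G(c_in_j,f)=d_G(cn_j,f)=d_{G'}(cn_j,f)$; summing $w_e(c_in_j)$ over $i$ rebuilds $w_e'(cn_j)$, so the $k$ contributions in $G$ collapse to the single $cn_j$-contribution in $G'$.

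The main obstacle is the third block, where $e=c_in_j$ and $f=c_{i'}n_{j'}$ both lie in $I(C)$. Because $n_{j'}\in N(c_i)$ and $n_j\in N(c_{i'})$ while $C\cap N(c)=\emptyset$, a brief case analysis of the four endpoint distances yields $d_G(e,f)=0$ whenever $i=i'$ or $j=j'$ (shared endpoint of $e$ and $f$) and $d_G(e,f)=1$ otherwise. On the $G'$ side, $d_{G'}(cn_j,cn_{j'})=0$ in every case because $c$ is always a shared endpoint. Hence the unique surplus of the $G$-expansion over the $G'$-expansion equals
\[
\sum_{\substack{i,i'\in\{1,\ldots,k\},\ j,j'\in\{1,\ldots,s\} \\ i\neq i',\ j\neq j'}} w_e(c_in_j)\,w_e(c_{i'}n_{j'}),
\]
which, by the definition $I(C)_{ij}=I(C)\setminus(I(c_i)\cup I(n_j))$, coincides with $\sum_{c_in_j\in I(C)}\sum_{e\in I(C)_{ij}} w_e(c_in_j)w_e(e)$; dividing by $2$ produces the claimed correction term. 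For the ``moreover'' specialization, substituting $w_e\equiv a$ on $I(C)$ and noting that there are $ks$ choices of $(i,j)$ with $(k-1)(s-1)$ admissible $(i',j')$ for each gives the value $\frac{a^2ks(k-1)(s-1)}{2}$.
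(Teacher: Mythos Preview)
Your proof is correct and follows essentially the same approach as the paper: both split the sum into three blocks according to whether each edge lies in $I(C)$ or not, use the twin-vertex distance facts (the paper's items $(i)$--$(iii)$) to collapse the first two blocks into the corresponding parts of $\widehat W_e(G',w_e')$, and identify the third block as the correction term. One small point: your stated lemma $d_G(u,v)=d_{G'}(u,v)$ is phrased for $u,v\in V(G)\setminus C$, but the step $d_G(cn_j,f)=d_{G'}(cn_j,f)$ also needs the case $u=c$; this follows by the same replacement argument and is worth stating explicitly (the paper leaves this implicit as well).
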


\begin{proof}
It is obvious that the theorem holds if $|C|=1$. Therefore, suppose  $c_1=c$ and $k \geq 2$. We can easily check that the following properties hold true:
	
	\begin{itemize}
		\item [$(i)$] $d_G(c_in_j,e)=d_G(c_rn_j,e)$ for any $c_i, c_r \in C$, $n_j \in N(c)$, and $e \in E(G) \setminus I(C)$,
			\item [$(ii)$] $d_G(e,f)=d_{G'}(e,f)$  for any two edges $e,f \in E(G) \setminus I(C)$,
		\item [$(iii)$] $d_G(c_in_j,c_rn_t)=1$ for any $c_in_j, c_rn_t \in I(C)$, $i \neq r$, $j \neq t$.

	\end{itemize}

\noindent
Using the obtained facts one can proceed as follows:

\begin{eqnarray*}
\widehat{W}_e(G,w_e)&=& \sum_{\{e,f\} \subseteq E(G)} w_e(e)w_e(f)d_G(e,f) \\
&=& \sum_{e \in E(G) \setminus I(C)}\sum_{j=1}^s \sum_{i=1}^k w_e(e)w_e(c_in_j) d_G(c_in_j,e)  \\
&+& \sum_{\{e,f\} \subseteq E(G)\setminus I(C)}w_e(e)w_e(f)d_G(e,f)  \\
&+& \sum_{\{e,f\} \subseteq I(C)}w_e(e)w_e(f)d_G(e,f)  \\
&=& \sum_{e \in E(G) \setminus I(C)} \sum_{j=1}^s w_e(e)  d_G(cn_j,e)  \sum_{i=1}^k w_e(c_in_j)  \\
&+& \sum_{\{e,f\} \subseteq E(G)\setminus I(C)}w_e(e)w_e(f)d_G(e,f)  \\
&+& \frac{1}{2} \sum_{c_in_j \in I(C)} \sum_{e \in I(C)_{ij}} w_e(c_in_j)w_e(e). 
\end{eqnarray*}
Therefore, we finally deduce

\begin{eqnarray*}
\widehat{W}_e(G,w_e) &=& \sum_{e \in E(G') \setminus I(c)} \sum_{j=1}^s w_e'(e)w_e'(cn_j) d_{G'}(cn_j,e)  \\
&+& \sum_{\{e,f\} \subseteq E(G')\setminus  I(c)}w_e'(e)w_e'(f)d_{G'}(e,f)  \\
&+& \frac{1}{2} \sum_{c_in_j \in I(C)} \sum_{e \in I(C)_{ij}} w_e(c_in_j)w_e(e) \\
& = & \widehat{W}_e(G',w_e') + \frac{1}{2} \sum_{c_in_j \in I(C)} \sum_{e \in I(C)_{ij}} w_e(c_in_j)w_e(e).
\end{eqnarray*}

\noindent
Furthermore, when all the edges from $I(C)$ have the same weight $a$, the last term in the above formula can obviously be simplified to $\frac{a^2ks}{2}(k-1)(s-1)$. \qed
\end{proof}

\noindent
Next, we prove a similar result also for $W_{ve}(G,w,w_e)$.
\begin{theorem} \label{posebna_redukcija2}
If $(G,w,w_e)$ is a connected weighted graph, $c \in V(G)$, $C=[c]_R = \{c_1,\ldots,c_k\}$, and $N(c)= \lbrace n_1, \ldots, n_s \rbrace$, then
$$W_{ve}(G,w,w_e)= W_{ve}(G',w',w_e') +  \sum_{i=1}^k \sum_{c_r \in C \setminus \lbrace c_i \rbrace} \sum_{e \in I(c_r)} w(c_i)w_e(e).$$
Moreover, if $w(c_i)=a$ for any $i \in \lbrace 1, \ldots, k \rbrace$ and $w_e(e)=b$ for all $e \in I(C)$, $a,b \in \mathbb{R}_0^+$, then $$W_{ve}(G,w,w_e)= W_{ve}(G',w',w_e') + abk(k-1)s.$$
\end{theorem}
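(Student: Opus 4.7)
The plan is to expand the definition of $W_{ve}(G,w,w_e)$ and split the double sum according to whether the vertex lies in $C$ or in $V(G)\setminus C$, and similarly whether the edge lies in $I(C)$ or in $E(G)\setminus I(C)$. This yields four sums; three of them should assemble into $W_{ve}(G',w',w_e')$ (by definition of the reduced weights), and the remaining fourth sum, over pairs $(v,e)$ with $v \in C$ and $e \in I(C)$, should produce the explicit correction term on the right-hand side.

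The computation rests on a handful of distance identities coming from the fact that all $c_i \in C$ are twins. First, for any $v \notin C$ and any $c_i, c_r \in C$ one has $d_G(v,c_i)=d_G(v,c_r)=d_{G'}(v,c)$, because the first edge of any $v$-to-$c_i$ shortest path lands in $N(c)$, and we may redirect it to $c$; the same observation shows that such shortest paths never need to enter $C\setminus\{c\}$, so $d_G$ and $d_{G'}$ agree on all pairs not involving $C\setminus\{c\}$. Consequently, for any $v\notin C$, $d_G(v,c_i n_j)=\min\{d_G(v,c_i),d_G(v,n_j)\}=d_{G'}(v,cn_j)$, and for any $e=xy \in E(G)\setminus I(C)$ (so $x,y\notin C$) and any $c_i\in C$, $d_G(c_i,e)=d_{G'}(c,e)$. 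Finally, for $c_i\in C$ and $e=c_rn_j \in I(C)$, the distance is $0$ if $r=i$ and exactly $1$ if $r\neq i$ (since $c_i$ and $n_j$ are adjacent and $c_i,c_r$ share a neighbor).

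Armed with these identities, the cross-sum where $v\notin C$ and $e=c_in_j\in I(C)$ collapses via
\[
\sum_{v\notin C}\sum_{j=1}^{s}\sum_{i=1}^{k} w(v)w_e(c_in_j)d_G(v,c_in_j)
=\sum_{v\notin C}\sum_{j=1}^{s} w'(v)\,w_e'(cn_j)\,d_{G'}(v,cn_j),
\]
and the analogous sum with $v\in C$ and $e\notin I(C)$ collapses by pulling $\sum_i w(c_i)=w'(c)$ out of the inner sum. Adding the straightforward $v\notin C$, $e\notin I(C)$ piece, and noting that the contribution of the new edges $cn_j$ to themselves in $G'$ vanishes (since $d_{G'}(c,cn_j)=0$), these three pieces reassemble exactly into $W_{ve}(G',w',w_e')$. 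The remaining $v\in C$, $e\in I(C)$ piece, using $d_G(c_i,c_rn_j)=0$ for $r=i$ and $1$ otherwise, becomes
\[
\sum_{i=1}^{k}\sum_{r\neq i}\sum_{j=1}^{s} w(c_i)w_e(c_rn_j)
=\sum_{i=1}^{k}\sum_{c_r \in C\setminus\{c_i\}}\sum_{e \in I(c_r)} w(c_i)w_e(e),
\]
which is the claimed correction term. The main bookkeeping hurdle will be making sure nothing is double-counted when $v=c$ appears both as the representative of $C$ and as a vertex of $G'$; this is handled cleanly because the summation over $C$ is written as $\sum_{i=1}^{k}$ and the $W_{ve}(G',w',w_e')$ sum groups all contributions of $c_1,\dots,c_k$ into a single contribution of $c$ with weight $w'(c)=\sum_i w(c_i)$.

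For the simplified form, substitute $w(c_i)=a$ and $w_e(e)=b$ into the correction term: the inner sum $\sum_{e\in I(c_r)}w_e(e)=bs$, the middle sum over $c_r\neq c_i$ has $k-1$ terms, and the outer sum over $i$ has $k$ terms, giving $abks(k-1)$ as asserted. \qed
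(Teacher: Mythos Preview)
Your proof is correct and follows essentially the same approach as the paper: both split $W_{ve}(G,w,w_e)$ into the four pieces according to whether $v\in C$ and whether $e\in I(C)$, invoke the same twin-vertex distance identities (the paper lists them as $(i)$--$(iv)$), and reassemble three of the pieces into $W_{ve}(G',w',w_e')$ while the fourth yields the correction term. Your write-up is slightly more explicit in justifying why $d_G$ and $d_{G'}$ agree on the relevant pairs and in noting that the $v=c$, $e\in I(c)$ contribution to $W_{ve}(G',w',w_e')$ vanishes, but the argument is the same.
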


\begin{proof}
It is obvious that the theorem holds if $|C|=1$. Therefore, suppose  $c_1=c$ and $k \geq 2$. We can easily check that the following properties hold true:
	
	\begin{itemize}
		\item [$(i)$] $d_G(c_i,e)=d_G(c_r,e)$ for any $c_i, c_r \in C$ and $e \in E(G) \setminus I(C)$,
		\item [$(ii)$] $d_G(v,c_in_j)=d_G(v,c_rn_j)$ for any $v \in V(G)\setminus C$, $c_i, c_r \in C$, and $n_j \in N(c)$,
			\item [$(iii)$] $d_G(v,e)=d_{G'}(v,e)$ for any $v \in V(G) \setminus C$ and $e \in E(G) \setminus I(C)$,
		\item [$(iv)$] $d_G(c_i,c_rn_j)=1$ for any $c_i,c_r \in C$, $i \neq r$, and $n_j \in I(c_r)$.
\end{itemize}
	
\noindent
Using these facts one can proceed as follows:

\begin{eqnarray*}
W_{ve}(G,w,w_e)&=& \sum_{v \in V(G)} \sum_{e \in E(G)} w(v)w_e(e)d_G(v,e) \\
&=&  \sum_{e \in E(G) \setminus I(C)} \sum_{i=1}^k  w(c_i)w_e(e) d_G(c_i,e)  \\
&+& \sum_{v \in V(G) \setminus C} \sum_{j=1}^s \sum_{i=1}^k w(v)w_e(c_in_j)d_G(v,c_in_j)  \\
&+& \sum_{v \in V(G) \setminus C} \sum_{e \in E(G) \setminus I(C)} w(v)w_e(e)d_G(v,e)  \\
& + & \sum_{i=1}^k  \sum_{e \in I(C)} w(c_i)w_e(e)d_G(c_i,e)\\ 
&=&  \sum_{e \in E(G) \setminus I(C)}w_e(e)d_G(c,e) \sum_{i=1}^k  w(c_i)  \\
&+& \sum_{v \in V(G) \setminus C} \sum_{j=1}^s w(v)d_G(v,cn_j) \sum_{i=1}^k w_e(c_in_j)  \\
&+& \sum_{v \in V(G) \setminus C} \sum_{e \in E(G) \setminus I(C)} w(v)w_e(e)d_G(v,e)  \\
& + & \sum_{i=1}^k \sum_{c_r \in C \setminus \lbrace c_i \rbrace} \sum_{n_j \in N(c_r)} w(c_i)w_e(c_rn_j).
\end{eqnarray*}
Hence, we finally deduce
\begin{eqnarray*}
{W}_{ve}(G,w,w_e) &=& \sum_{e \in E(G') \setminus I(c)}  w'(c)w_e'(e) d_{G'}(c,e)  \\
&+& \sum_{v \in V(G') \setminus \lbrace c \rbrace} \sum_{j=1}^s w'(v)w_e'(cn_j)d_{G'}(v,cn_j) \\
&+& \sum_{v \in V(G') \setminus \lbrace c \rbrace} \sum_{e \in E(G') \setminus I(c)} w'(v)w_e'(e)d_{G'}(v,e)  \\
&+& \sum_{i=1}^k \sum_{c_r \in C \setminus \lbrace c_i \rbrace} \sum_{e \in I(c_r)} w(c_i)w_e(e) \\
& = & {W}(G',w',w_e') + \sum_{i=1}^k \sum_{c_r \in C \setminus \lbrace c_i \rbrace} \sum_{e \in I(c_r)} w(c_i)w_e(e).
\end{eqnarray*} 

\noindent
In addition, if $w(c_i)=a$ for any $i \in \lbrace 1, \ldots, k \rbrace$ and $w_e(e)=b$ for all $e \in I(C)$, the last term obviously equals $abk(k-1)s$. \qed

\end{proof}

\section{An example}

Here we apply the obtained results on an infinite family of graphs to show how they can be used to easily calculate the edge-Wiener index. For $m,n \geq 1$, let $G_{m,n}$ be the graph shown in Figure \ref{graphGnm} which has $m$ horizontal and $n$ vertical layers of hexagons. We obtain $|V(G_{m,n})| = (m+1)(2n+1)$ and $|E(G_{m,n})|=3mn+m+2n$. Note that this family of graphs includes many important molecular graphs. In particular, when $m=1$, graph $G_{1,n}$ is the linear benzenoid chain with $n$ hexagons (also called polyacene) \cite{gucy-89}. Moreover, if $n=1$, then the family $G_{m,1}$ represents fused cyclohexane rings: for an example, $G_{2,1}$ is the molecular graph of bicyclo[3.3.1]nonane \cite{abe}. 

\begin{figure}[h!] 
\begin{center}
\includegraphics[scale=0.7,trim=0cm 0.5cm 0cm 0cm]{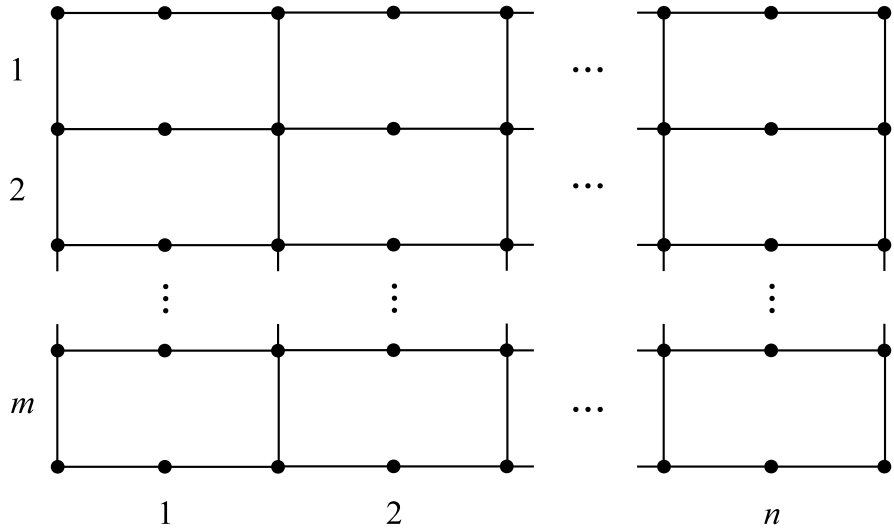}
\end{center}
\caption{\label{graphGnm} Graph $G_{m,n}$.}
\end{figure}

Firstly, we have to determine the $\Theta^*$-classes of $G_{m,n}$. It is easy to see that any two edges on a shortest path can not be in relation $\Theta$. Moreover, any two diametrically opposite edges in an isometric even cycle of a graph are in relation $\Theta$. By using these facts, it is not difficult to check that graph $G_{m,n}$, where $m \geq 2$, has $\Theta^*$-classes $D_1, \ldots, D_m$ and $E_1, \ldots, E_{n}$ as shown in Figure \ref{theta_classes}. Moreover, for $m=1$, the set $E_i$, $i \in \lbrace 1, \ldots, n \rbrace$, in Figure \ref{theta_classes} is composed of two $\Theta$-classes. Furthermore, it is easy to see that for $m \geq 2$ graph $G_{m,n}$ is not a partial cube since relation $\Theta$ is not transitive.

\begin{figure}[h!] 
\begin{center}
\includegraphics[scale=0.7,trim=0cm 0.5cm 0cm 0cm]{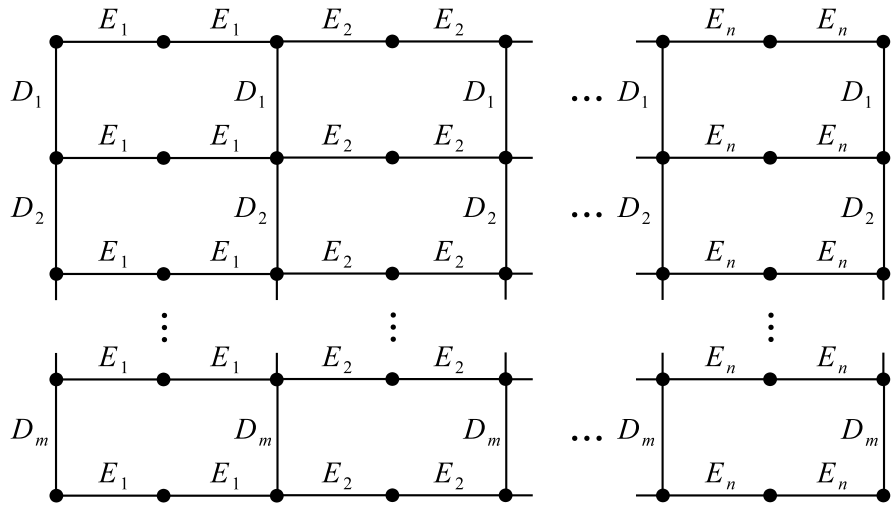}
\end{center}
\caption{\label{theta_classes} $\Theta^*$-classes of $G_{m,n}$, where $m \geq 2$.}
\end{figure}

Next, we define $F_1 = \cup_{i=1}^m D_i$ and $F_2 = \cup_{i=1}^n E_i$. Obviously, $\lbrace F_1,F_2 \rbrace$ is a c-partition of the set $E(G_{m,n})$. The quotient graph $G_{m,n}/F_1$ is isomorphic to the path on $m+1$ vertices and the weights $w^1,w_e^1$ are calculated as in Corollary \ref{posl_izreka}, see Figure \ref{quotient1}.

\begin{figure}[h!] 
\begin{center}
\includegraphics[scale=0.7,trim=0cm 0.5cm 0cm 0cm]{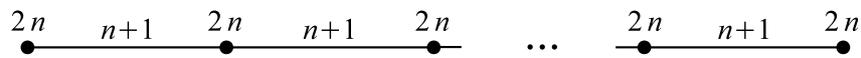}
\end{center}
\caption{\label{quotient1} Weighted quotient graph $(G_{m,n}/F_1, w^1, w_e^1)$, $G_{m,n}/F_1 \cong P_{m+1}$.}
\end{figure}

\noindent
Therefore, we compute:

\begin{eqnarray*}
W(G_{m,n}/F_1,w^1) & = & 4n^2 \sum_{i=1}^{m} \sum_{j=i+1}^{m+1}(j-i) = \frac{2n^2}{3}\left ( m^3 + 3m^2 + 2m \right), \\
\widehat{W}_e(G_{m,n}/F_1,w_e^1) & = & (n+1)^2 \sum_{i=1}^{m-1} \sum_{j=i+1}^{m}(j-i-1) = \frac{(n+1)^2}{6}\left ( m^3 - 3m^2 + 2m \right), \\
{W}_{ve}(G_{m,n}/F_1,w^1, w_e^1) & = & 2n(n+1) \left( \sum_{i=2}^{m+1} \sum_{j=1}^{i-1}(i-j-1) + \sum_{i=1}^{m}\sum_{j=i}^{m} (j-i) \right) \\
& = & \frac{2n(n+1)}{3}\left ( m^3 -m \right).
\end{eqnarray*}

On the other hand, the quotient graph $G_{m,n}/F_2$ is depicted in Figure \ref{quotient2}. Weights $w^2,w_e^2$ are again calculated in terms of Corollary \ref{posl_izreka}. The sets of vertices $C^1, \ldots, C^n$ are shown in the same figure and each of them contains $m+1$ vertices.

\begin{figure}[h!] 
\begin{center}
\includegraphics[scale=0.7,trim=0cm 0.5cm 0cm 0cm]{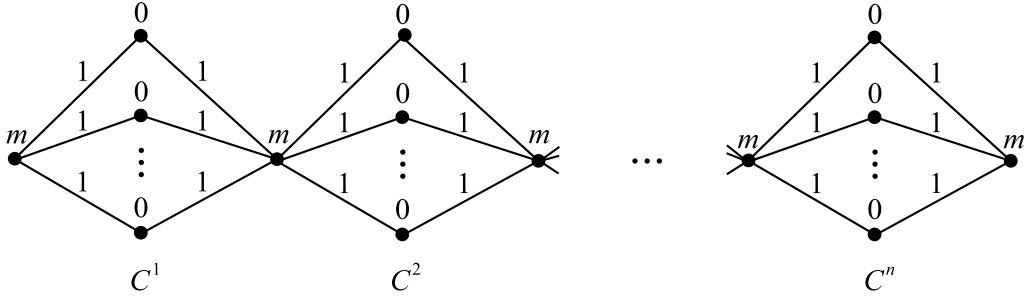}
\end{center}
\caption{\label{quotient2} Weighted quotient graph $(G_{m,n}/F_2, w^2, w_e^2)$ with sets of vertices $C^1, \ldots, C^n$.}
\end{figure}

Note that the sets $C^i$, $i \in \lbrace 1, \ldots, n \rbrace$, represent some of the equivalence classes of $R$. Therefore, we perform the reduction from Section 4 exactly $n$ times on $G_{m,n}/F_2$ and obtain the weighted graph which will be denoted as $(H, w^3,w_e^3)$. Obviously, $H$ is isomorphic to the path on $2n+1$  vertices, see Figure \ref{reduction}.

\begin{figure}[h!] 
\begin{center}
\includegraphics[scale=0.7,trim=0cm 0.5cm 0cm 0cm]{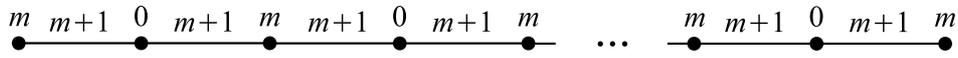}
\end{center}
\caption{\label{reduction} Weighted graph $(H, w^3, w_e^3)$, $H \cong P_{2n+1}$.}
\end{figure}

\noindent
Hence, we can calculate:
\begin{eqnarray*}
W(H,w^3) & = & m^2 \sum_{i=0}^{n-1} \sum_{j=i+1}^{n} ((2j+1) - (2i+1)) = \frac{m^2}{3} \left( n^3 + 3n^2 + 2n \right), \\
\widehat{W}_e(H,w_e^3) & = & (m+1)^2 \sum_{i=1}^{2n-1} \sum_{j=i+1}^{2n}(j-i-1) = \frac{2(m+1)^2}{3}\left ( 2n^3 - 3n^2 + n \right), \\
{W}_{ve}(H,w^3, w_e^3) & = & m(m+1) \left( \sum_{i=1}^{n} \sum_{j=1}^{2i}(2i+1 - j - 1) + \sum_{i=0}^{n-1}\sum_{j=2i+1}^{2n} (j-2i-1) \right) \\
& = & \frac{m(m+1)}{3}\left ( 4n^3 + 3n^2 - n \right).
\end{eqnarray*}

\noindent
By Theorems \ref{posebna_redukcija0}, \ref{posebna_redukcija1}, and \ref{posebna_redukcija2} we obtain
\begin{eqnarray*}
W(G_{m,n}/F_2,w^2) & = & W(H,w^3) + 0 \cdot n, \\
\widehat{W}_e(G_{m,n}/F_2,w_e^2) & = & \widehat{W}_e(H,w_e^3) + m(m+1)n, \\
{W}_{ve}(G_{m,n}/F_2,w^2, w_e^2) & = & W_{ve}(H, w^3, w_e^3) + 0 \cdot n.
\end{eqnarray*}

\noindent
Finally, by Corollary \ref{posl_izreka} it follows
\begin{eqnarray*}
W_e(G_{m,n}) & = &  \frac{1}{6} \big( 9m^3n^2 + 18m^2n^3 + 6m^3n + 36m^2n^2 + 24mn^3 + m^3   \\
&+& 24m^2n + 24mn^2 + 8n^3 + 15mn - m -2n \big).
\end{eqnarray*}
\section{Concluding remarks}
In the paper, we have developed a method for computing the edge-Wiener index, which is a well investigated distance-based topological descriptor. Our method is useful for any connected graph with at least two $\Theta^*$-classes and reduces the problem of computing the edge-Wiener index to the problem of computing three Wiener indices of weighted quotient graphs. Therefore, it generalizes analogous results known for benzenoid systems and phenylenes. Since in some cases such quotient graphs can be further transformed into smaller (also called reduced) graphs, we have proved that the edge-Wiener index and the vertex-edge-Wiener index of a weighted graph can be computed from the reduced graphs. These results were applied in Section 5, where the closed formula for the edge-Wiener index of an infinite family of graphs was deduced. 

It is worth mentioning that the obtained results can be used to  efficiently calculate the edge-Wiener index for many important families of molecular graphs. Moreover, in Section 3 we have proved a formula for computing the distance between two edges in a graph from the distances in the quotient graphs, which can be applied in order to develop new methods for computing some other distance-based topological indices.
   
\section*{Acknowledgement} 
\noindent
The author was financially supported by the Slovenian Research Agency (research core funding No. P1-0297 and J1-9109).

\baselineskip=16pt


\begin{thebibliography}{99}

\bibitem{abe} M. Abe, M. Nakada, New construction of the bicyclo$[3.3.1]$nonane system via Lewis
acid promoted regioselective ring-opening reaction of the
tricyclo$[4.4.0.0^{5,7}]$dec-2-ene derivative, Tetrahedron Lett. 47 (2006) 6347--6351.

\bibitem{aroc_cle} M. Arockiaraj, J. Clement, K. Balasubramanian, Topological indices and their applications to circumcised donut benzenoid systems, Kekulenes and drugs, Polycycl.
Aromat. Compd. DOI: 10.1080/10406638.2017.1411958.

\bibitem{arock} M. Arockiaraj, A. J. Shalini, Extended cut method for edge Wiener, Schultz and Gutman indices with applications, {MATCH Commun.\ Math.\ Comput.\ Chem}.\ {76} (2016) 233--250.
 
\bibitem{behm} A. Behmaram, H. Yousefi-Azari, A. R. Ashrafi, Some new results on distance-based polynomials, MATCH Commun. Math. Comput. Chem. 65 (2011) 39--50.

\bibitem{brez-trat} S. Brezovnik, N. Tratnik, New methods for calculating the degree distance and the Gutman index, MATCH Commun. Math. Comput. Chem. 82 (2019) 111--132.
  
  \bibitem{chen} A. Chen, X. Xiong, F. Lin, Explicit relation between the Wiener index and the edge-Wiener index of the catacondensed hexagonal systems, Appl. Math. Comput. 273 (2016) 1100--1106.
  
  
  \bibitem{cre-trat1} M.~\v Crepnjak, N.~Tratnik, The edge-Wiener index, the Szeged indices and the PI index of benzenoid systems in sub-linear time, MATCH Commun. Math. Comput. Chem. 78 (2017) 675--688.


\bibitem{dankelmann-2009}
  P.~Dankelmann, I.~Gutman, S.~Mukwembi, H.~Swart,
  The edge-Wiener index of a graph,
  {Discrete Math}.\ {309} (2009) 3452--3457.

\bibitem{dobrynin-1999}
  A.~A.~Dobrynin, 
  Distance of iterated line graphs, 
  {Graph Theory Notes N.\ Y}.\ {37} (1999) 8--9.

    
\bibitem{gutman-1996}    
  I.~Gutman, 
  Distance of line graphs, 
  {Graph Theory Notes N.\ Y}.\ {31} (1996) 49--52. 
  
  \bibitem{gucy-89}
I.  Gutman, S.~J. Cyvin, Introduction to the Theory of Benzenoid Hydrocarbons, Springer-Verlag, Berlin, 1989.
 
  
  \bibitem{klavzar-book}
R. Hammack, W. Imrich, S. Klav\v{z}ar, Handbook of Product Graphs, Second Edition, RC Press, Taylor \& Francis Group, Boca Raton, 2011.

\bibitem{iranmanesh-2015}
  A.~Iranmanesh, M.~Azari,
  Edge-Wiener descriptors in chemical graph theory: a survey,
  {Curr.\ Org.\ Chem}.\ {19} (2015) 219--239.

\bibitem{iranmanesh-2009}
  A.~Iranmanesh, I.~Gutman, O.~Khormali, A.~Mahmiani, 
  The edge versions of Wiener index, 
  {MATCH Commun.\ Math.\ Comput.\ Chem}.\ {61} (2009) 663--672.

\bibitem{kelenc}
A. Kelenc, S. Klav\v zar, N. Tratnik, The edge-Wiener index of benzenoid systems in linear time, MATCH Commun. Math. Comput. Chem. 74 (2015) 521--532.

\bibitem{khalifeh-2009}
  M.~H.~Khalifeh, H.~Yousefi-Azari, A.~R.~Ashrafi, S.~G.~Wagner, 
  Some new results on distance-based graph invariants, 
  {European J.\ Combin.}\ {30} (2009) 1149--1163.
  

\bibitem{redukcija} S. Klav\v zar, P. Manuel, M. J. Nadjafi-Arani, R. S. Rajan, C. Grigorious, S. Stephen, Average distance in interconnection networks via reduction theorems for vertex-weighted graphs, Comput. J. 59 (2016) 1900--1910.


\bibitem{klavzar-2015}
  S.~Klav\v zar, M.~J.~Nadjafi-Arani, Cut method: update on recent developments and equivalence of independent approaches,
  {Curr.\ Org.\ Chem}.\ 19 (2015) 348--358.
  
  \bibitem{klavzar-2016} S.~Klav\v zar, M.~J.~Nadjafi-Arani, Wiener index in weighted graphs via unification of $\Theta^{*}$-classes, European J. Combin. 36 (2014) 71--76.   

  
\bibitem{knor-2014b}
  M.~Knor, P.~Poto\v cnik, R.~\v Skrekovski,
  Relationship between the edge-Wiener index and the Gutman index of a graph, Discrete Appl.\ Math. 167 (2014) 197--201. 
  
  \bibitem{knor} M. Knor, R. \v Skrekovski, A. Tepeh, An inequality between the edge-Wiener index and the Wiener index of a graph, Appl. Math. Comput. 269 (2015) 714--721.

  \bibitem{knor1} M. Knor, R. \v Skrekovski, A. Tepeh, Mathematical aspects of Wiener index, Ars Math. Contemp. 11 (2016) 327--352.

\bibitem{li} X. Li, M. Zhang, A Note on the computation of revised (edge-)Szeged index in terms of canonical isometric embedding, MATCH Commun. Math. Comput. Chem. 81 (2019) 149--162.

\bibitem{soltani-2014}  
  A.~Soltani, A.~Iranmanesh, Z.~A.~Majid, 
  The multiplicative version of the edge Wiener index,
  {MATCH Commun.\ Math.\ Comput.\ Chem}.\ 71 (2014) 407--416.  

  \bibitem{tratnik_grao} N. Tratnik, The Graovac-Pisanski index of zig-zag tubulenes and the generalized cut method, J. Math. Chem. 55 (2017) 1622--1637.

\bibitem{tratnik1} N. Tratnik, A method for computing the edge-hyper-Wiener index of partial cubes and an algorithm for benzenoid systems, Appl. Anal. Discr. Math. 12 (2018) 126--142.



\bibitem{trat_steiner} N. Tratnik, On the Steiner hyper-Wiener index of a graph, Appl. Math. Comput. 337 (2018) 360--371.


 



\bibitem{yousefi-azari-2011} 
  H.~Yousefi-Azari, M.~H.~Khalifeh, A.~R.~Ashrafi, 
  Calculating the edge Wiener and edge Szeged indices of graphs,
  {J.\ Comput.\ Appl.\ Math}.\  235 (2011) 4866--4870.     

    
\bibitem{zigert-2018} P. \v Zigert Pleter\v sek, The edge-Wiener index and the edge-hyper-Wiener index of phenylenes, Discrete Appl. Math. 255 (2019) 326--333.
  
    
\end{thebibliography}
\end{document}